\newcommand{\gauss}[2]{\genfrac{[}{]}{0pt}{}{#1}{#2}}
\newtheorem{thm}{Theorem}[section]
\newtheorem{proposition}[thm]{Proposition}
\newtheorem{cor}[thm]{Corollary}
\newtheorem{dfn}[thm]{Definition}
\newtheoremstyle{definition}
{3pt} 
{3pt} 
{} 
{} 
{\bfseries} 
{.} 
{.5em} 
{} 
\title{The $A_2$ Rogers-Ramanujan identities revisited}
\date{\today}
\author{Sylvie Corteel}
\address{IRIF, CNRS et Universit\'e Paris Diderot,
France}
\email{corteel@irif.fr}
\author{Trevor Welsh}
\address{Department of Mathematics and Statistics,
University of Melbourne, Australia}
\email{twelsh1@unimelb.edu.au}
\begin{document}

\maketitle

{\centering\footnotesize To George Andrews for his $80^{th}$ birthday \par}

\vspace{.3cm}

\begin{abstract}
In this note we show how to use cylindric partitions to rederive the
$A_2$ Rogers-Ramanujan identities originally proven by Andrews,
Schilling and Warnaar.
\end{abstract}


\section{Introduction}

The Rogers-Ramanujan identities were first proved in 1894
by Rogers and rediscovered in the 1910s
by Ramanujan \cite{RR}.
They are
\begin{equation}
\sum_{n\ge 0} \frac{q^{n(n+i)}}{(q;q)_n}=\frac{1}{(q^{1+i};q^5)_\infty(q^{4-i};q^5)_\infty}.
\end{equation}
with $i=0,1$ and where $(a,q)_\infty=\prod_{i\ge 0}(1-aq^i)$ and $(a;q)_n=(a;q)_\infty/(aq^n;q)_\infty$.

There have been
many attempts to give combinatorial proofs of these identities
and the first one is due to Garsia and Milne \cite{GM}.
Unfortunately, it is not simple, and
no simple combinatorial proof is known.
Recently in \cite{C}, the first
author presented a new bijective approach to the proofs
of the Rogers-Ramanujan identities via the
Robinson-Schensted-Knuth correspondence as presented in \cite{Pak}. 
The bijection does not give the Rogers-Ramanujan identities but
the Rogers-Ramanujan identities divided by $(q;q)_\infty$; namely
\begin{equation}
\label{RR2}
\frac{1}{(q;q)_\infty}\sum_{n\ge 0} \frac{q^{n(n+1)}}{(q;q)_n}=\frac{1}{(q,q^2,q^2,q^3,q^3,q^4,q^5;q^5)_\infty}
\end{equation}
where $(a_1,\ldots , a_k;q)_\infty=\prod_{i=1}^k (a_i;q)_\infty$.
This proof uses the combinatorics of cylindric partitions \cite{GK}. We interpret both sides as the generating function of cylindric partitions of profile $(3,0)$ and the bijection is a polynomial algorithm in the size of the cylindric partition.
The idea to use cylindric partitions is due to Foda and  Welsh \cite{FW} in a more general setting:
the Andrews-Gordon identities  \cite{And}. For $k>0$ and  $0\le i\le k$,
these identities divided by $(q;q)_\infty$ are
$$
\frac{1}{(q;q)_\infty}\sum_{n_1,\ldots, n_k} \frac{q^{\sum_{j=1}^k n_j^2+\sum_{j=i}^k n_j}}
{(q)_{n_1-n_2}\ldots (q)_{n_{k-1}-n_k}(q)_{n_k}}=
\frac{(q^i,q^{2k+3-i},q^{2k+3};q^{2k+3})_\infty}{(q;q)_\infty^2}.
$$
Foda and Welsh  interpret the sum side as a generating function for
(what they call) decorated Bressoud paths and the product side is
interpreted as the generating function of cylindric partitions
of profile $(2k+1-i,i)$,and they provide a bijection between the two objects.
See \cite{FW} for more details.

In this note, we take the idea of applying cylindric partitions to
Rogers-Ramanujan type identities a step further,
by using them to give an alternative proof of the $A_2$ Rogers-Ramanujan
identities due to Andrews, Schilling and Warnaar \cite{ASW}.
\begin{thm} We have
\begin{equation*}
\begin{split}
\sum_{n_1=0}^\infty\sum_{n_2=0}^{2n_1} 
\frac{q^{n_1^2+n_2^2-n_1n_2+n_1+n_2}}{(q;q)_{n_1}}
	\gauss{2n_1}{n_2}
	&=\frac{1}{(q^2,q^3,q^3,q^4,q^4,q^5;q^7)_\infty},\\
\sum_{n_1=0}^\infty\sum_{n_2=0}^{2n_1}  \frac{q^{n_1^2+n_2^2-n_1n_2+n_2}}{(q;q)_{n_1}}
\gauss{2n_1}{n_2}
	&=\frac{1}{(q,q^2,q^3,q^4,q^5,q^6;q^7)_\infty},\\
\sum_{n_1=0}^\infty\sum_{n_2=0}^{2n_1+1}  \frac{q^{n_1^2+n_2^2-n_1n_2+n_1}}{(q;q)_{n_1}}
\gauss{2n_1+1}{n_2}
	&=\frac{1}{(q,q^2,q^3,q^4,q^5,q^6;q^7)_\infty},\\
\sum_{n_1=0}^\infty\sum_{n_2=0}^{2n_1+1} \frac{q^{n_1^2+n_2^2-n_1n_2+n_2}}{(q;q)_{n_1}}
\gauss{2n_1+1}{n_2}
	&=\frac{1} {(q,q^2,q^2,q^5,q^5,q^6;q^7)_\infty},\\
\sum_{n_1=0}^\infty\sum_{n_2=0}^{2n_1}
\frac{q^{n_1^2+n_2^2-n_1n_2}}{(q;q)_{n_1}}
\gauss{2n_1}{n_2}
	&=\frac{1}{(q,q,q^3,q^4,q^6,q^6;q^7)_\infty},\\
\end{split}
\end{equation*}
where the Gaussian polynomial $\gauss{n}{k}$ is defined by
$$
\gauss{n}{k}
	=\frac{(q;q)_n} {(q;q)_k(q;q)_{n-k}}.$$
\label{main}
\end{thm}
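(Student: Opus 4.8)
The plan is to realize both sides of each identity as generating functions for cylindric partitions and to bridge them by a finite system of $q$-difference equations. Since every right-hand product has modulus $7$ and the governing affine type is $A_2^{(1)}$ (three rows, with Cartan quadratic form $n_1^2-n_1n_2+n_2^2$), I would work with cylindric partitions whose profile $c=(c_0,c_1,c_2)$ is a composition of $4$ into three nonnegative parts, so that $7=4+3$ matches the Foda--Welsh pattern $\mathrm{modulus}=(\text{sum of parts})+(\text{number of parts})$ already seen for the Andrews--Gordon case in \cite{FW}. Borodin's product formula then evaluates $\sum_{\mu}q^{|\mu|}$ over cylindric partitions of profile $c$ as a modulus-$7$ infinite product. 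Up to cyclic rotation there are exactly five such profiles, namely $(4,0,0)$, $(3,1,0)$, $(3,0,1)$, $(2,2,0)$ and $(2,1,1)$; pairing each with its Borodin product (while keeping track of which linear factors $q^a$ appear) should account for the five right-hand sides of Theorem~\ref{main}.

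First I would introduce the refined generating function $F_c(z)=F_c(z,q)=\sum_{\mu}z^{\max(\mu)}q^{|\mu|}$, summed over cylindric partitions of profile $c$, where $\max(\mu)$ records the largest part; Borodin's formula is then the specialization $F_c(1)$. Peeling off the largest part (equivalently, reading the array diagonal by diagonal, as in \cite{C}) relates $F_c(z)$ to the functions $F_{c'}(qz)$ for the profiles $c'$ obtained from $c$ by the elementary moves of the $A_2^{(1)}$ action. This yields a finite coupled system of linear $q$-difference equations in $z$ with monomial coefficients in $q$ and $z$, and it is precisely these $A_2$ moves that will inject the Cartan quadratic form into the solution.

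Next I would solve the system by an ansatz: posit that each $F_c(z)$ equals a double sum
\[
F_c(z)=\sum_{n_1,n_2}\frac{q^{\,n_1^2-n_1n_2+n_2^2+\alpha n_1+\beta n_2}\,z^{\,a n_1+b n_2}}{(q;q)_{n_1}}\gauss{2n_1+\epsilon}{n_2},
\]
and verify that this satisfies both the recursion and the initial data. Substituting the ansatz into the $q$-difference system and shifting the summation indices should reduce each equation to an elementary Gaussian-polynomial identity (a Pascal-type recurrence for $\gauss{2n_1+\epsilon}{n_2}$ together with the $q$-binomial theorem), which closes the argument. Specializing the verified solutions at $z=1$ and reading off $F_c(1)$ from Borodin's product then delivers the five identities, since at $z=1$ the ansatz reproduces exactly the sums on the left of Theorem~\ref{main}.

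The main obstacle will be the solution step: setting up the coupled $q$-difference system for all profiles at once, pinning down the linear exponents $\alpha,\beta$ and the $z$-powers $a,b$ so that the five ansätze are mutually consistent under the $A_2^{(1)}$ moves, and checking that the induced Gaussian-polynomial recurrences close up. The most delicate bookkeeping is the cyclic symmetry among the fifteen compositions of $4$, so that rotated profiles yielding the same product are identified correctly and the five orbits are matched to the five right-hand sides. By contrast, extracting the product side from Borodin's formula and reducing the verification to $q$-binomial recurrences are comparatively routine.
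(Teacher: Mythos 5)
Your plan is essentially the paper's own proof: cylindric partitions whose profiles are the compositions of $4$ into $3$ nonnegative parts (five up to cyclic rotation), Borodin's formula for the five modulus-$7$ products, a refined generating function tracking the largest part, a coupled linear $q$-difference system obtained by peeling off the largest entries (the paper derives it by inclusion--exclusion over the set $J$ of components of the profile attaining the maximum), solution of the system by verifying an explicit double-sum ansatz against the recursion and initial data, and finally specialization at $z=1$. So the architecture matches; however, two steps would fail as you have literally stated them, and both need the same repairs the paper makes.

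First, the normalization: the raw $F_c(z)$ cannot equal your ansatz. The inclusion--exclusion system carries rational factors $1/(1-zq^{|J|})$ which a bare double sum cannot satisfy, and at $z=1$ Borodin's product is $\frac{1}{(q;q)_\infty(\cdots;q^7)_\infty}$ while the sum sides of the theorem lack the $(q;q)_\infty$ --- as written, your final step is off by exactly that factor. The paper fixes this by setting $G_c(z,q)=(zq;q)_\infty F_c(z,q)$ and positing the double-sum form for $G_c$; then $G_c(1,q)=(q;q)_\infty F_c(1,q)$ and the $(q;q)_\infty$ cancels correctly against Borodin. Second, your single-double-sum ansatz with $\gauss{2n_1+\epsilon}{n_2}$ is too rigid for the profiles $(3,0,1)$ and $(2,2,0)$. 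With the $\max$ statistic the $z$-exponent is forced to be $n_1$, and at generic $z$ the true solutions for these two profiles are sums of \emph{two} double sums (for instance $G_{(3,0,1)}$ has a term with $\gauss{2n_1}{n_2}$ and a term with $q^{2n_2}\gauss{2n_1-2}{n_2}$); the single $\gauss{2n_1+1}{n_2}$ forms appearing in the third and fourth identities emerge only after setting $z=1$ and shifting $(n_1,n_2)\mapsto(n_1+1,n_2-1)$, a shift that changes the power of $z$ and is therefore unavailable at generic $z$. You should allow two-term ansätze for those two profiles, verify the refined system for all five, and perform the Pascal-recurrence recombination into $\gauss{2n_1+1}{n_2}$ as a final step after the $z=1$ specialization.
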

All but the fourth of these identities were obtained in
Theorem 5.2 of \cite{ASW}, while the fourth was conjectured in
Section 2.4 of \cite{FW2}.
Note that the second and third expressions are equal.

In this note, we prove the following theorem, giving the
generating functions $F_{c,n}(q)$ of cylindric partitions
indexed by compositions $c$ of 4 into 3 parts,
with largest entry at most $n$:

\begin{thm}
\begin{eqnarray*}
F_{(4,0,0),n}(q)&=&\sum_{n_1=0}^n\sum_{n_2=0}^{2n_1} \frac{q^{n_1^2+n_2^2-n_1n_2+n_1+n_2}}{(q;q)_{n-n_1}(q;q)_{n_1}}\left[\begin{array}{c}
2n_1\\ n_2\end{array}\right],\\
F_{(3,1,0),n}(q)&=&\sum_{n_1=0}^n\sum_{n_2=0}^{2n_1}  \frac{q^{n_1^2+n_2^2-n_1n_2+n_2}}{(q;q)_{n-n_1}(q;q)_{n_1}}\left[\begin{array}{c}
2n_1\\ n_2\end{array}\right],\\
F_{(3,0,1),n}(q)&=&\sum_{n_1=0}^n\sum_{n_2=0}^{2n_1}  \frac{q^{n_1^2+n_2^2-n_1n_2}}{(q;q)_{n-n_1}}
\frac{q^{n_1}}{(q;q)_{n_1}}\left[\begin{array}{c}2n_1\\ n_2\end{array}\right]\\
&&+\sum_{n_1=1}^n\sum_{n_2=0}^{2n_1-2}  \frac{q^{n_1^2+n_2^2-n_1n_2}}{(q;q)_{n-n_1}}
\frac{q^{2n_2}}{(q;q)_{n_1-1}}\left[\begin{array}{c}
2n_1-2\\ n_2\end{array}\right],\\
F_{(2,2,0),n}(q)&=&\sum_{n_1=0}^n\sum_{n_2=0}^{2n_1} \frac{q^{n_1^2+n_2^2-n_1n_2}}{(q;q)_{n-n_1}}\frac{q^{n_1}}{(q;q)_{n_1}}\left[\begin{array}{c}
2n_1\\ n_2\end{array}\right]\\
&&+\sum_{n_1=1}^n\sum_{n_2=0}^{2n_1-2} \frac{q^{n_1^2+n_2^2-n_1n_2}}{(q;q)_{n-n_1}}\frac{q^{n_2}(1+q^{n1+n2})}{(q;q)_{n_1-1}}\left[\begin{array}{c}
2n_1-2\\ n_2\end{array}\right],\\
F_{(2,1,1),n}(q)&=&\sum_{n_1=0}^n\sum_{n_2=0}^{2n_1} \frac{q^{n_1^2+n_2^2-n_1n_2}}{(q;q)_{n-n_1}(q;q)_{n_1}}\left[\begin{array}{c}
2n_1\\ n_2\end{array}\right].
\end{eqnarray*}
\label{new}
\end{thm}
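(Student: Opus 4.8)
The plan is to prove Theorem~\ref{new} by induction on the bound $n$, by establishing a transfer recursion that expresses each $F_{c,n}$ in terms of the generating functions $F_{c',n-1}$ for profiles $c'$ linked to $c$, and then checking that the claimed double sums satisfy this recursion together with the initial data. First I would recall that a cylindric partition of profile $c$ is a cyclic sequence of interlacing partitions, that $|\pi|$ is the total of all its parts, and that $F_{c,n}$ is invariant under cyclic rotation of $c$; this collapses the fifteen compositions of $4$ into $3$ parts to the five representatives listed, so it suffices to treat these. The base case $n=0$ is immediate: the only cylindric partition with every entry $\le 0$ is the empty one, whence $F_{c,0}=1$, and indeed each right-hand side reduces to its $n_1=0$ term (the second sum, which starts at $n_1=1$, being empty), giving $1$.

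The heart of the argument is the recursion, which I would obtain by stripping the outer layer of $\pi$, that is, the cells whose entry equals the maximal admissible value $n$. Recording the shape of this layer as combinatorial data and verifying that what remains is a cylindric partition bounded by $n-1$ of a determined profile $c'$ yields a relation $F_{c,n}=\sum_{c'} w_{c\to c'}(q,n)\,F_{c',n-1}$, in which the weight $w_{c\to c'}$ accounts both for the $q$-weight of the removed layer and for the number of ways of laying it down (a Gaussian-polynomial factor); cyclic symmetry then closes this linear system on the five representatives. I would confirm that the explicit sums obey the same recursion by means of the $q$-Pascal relations $\gauss{m}{r}=\gauss{m-1}{r-1}+q^{r}\gauss{m-1}{r}$ and $\gauss{m}{r}=\gauss{m-1}{r}+q^{m-r}\gauss{m-1}{r-1}$, by telescoping the $1/(q;q)_{n-n_1}$ factors, and by reindexing the $n_1$-summation, so that the shift in the exponent $n_1^2+n_2^2-n_1n_2$ (the $A_2$ quadratic form) together with its linear terms matches the $q$-power carried by the transfer weights.

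I expect the main obstacle to be twofold. First, pinning down the transfer coefficients $w_{c\to c'}$ requires a careful analysis of how the interlacing inequalities degrade once the top layer is removed; this is precisely where the two-term right-hand sides for $F_{(3,0,1),n}$ and $F_{(2,2,0),n}$ should originate, from a genuine case split according to whether a distinguished part of the profile attains the maximum. Second, once the coefficients are fixed, the $q$-binomial bookkeeping needed to show that these two-sum formulas remain stable under the recursion is the most delicate computation. With Theorem~\ref{new} in hand, letting $n\to\infty$ so that $1/(q;q)_{n-n_1}\to 1/(q;q)_\infty$ and comparing with the product formula for the unbounded cylindric-partition generating function recovers Theorem~\ref{main}.
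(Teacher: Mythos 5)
Your plan is an outline rather than a proof: the two steps that you yourself identify as the heart of the argument are exactly the ones left undone, and one of them is set up incorrectly. First, the transfer weights $w_{c\to c'}(q,n)$ are never computed, and your anticipated structure for them is wrong. If you strip from each component $\lambda^{(i)}$ its $a_i$ parts equal to $n$, the interlacing conditions force $a_{i+1}\le a_i+c_{i+1}$ (cyclically, and this is automatic in the forward direction), and the remainder is cylindric of profile $c'$ with $c'_{i+1}=c_{i+1}+a_i-a_{i+1}$ and entries at most $n-1$. Hence the target profile $c'$ determines the differences $a_{i+1}-a_i$ uniquely, and the stripped layer is rigid up to the global shift $(a_1,a_2,a_3)\mapsto(a_1+1,a_2+1,a_3+1)$. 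The correct weight is therefore a geometric series $w_{c\to c'}(q,n)=q^{n\,s_0(c,c')}/(1-q^{3n})$, with $s_0$ the minimal value of $a_1+a_2+a_3$; there is no Gaussian-polynomial count of ``ways of laying down the layer.'' The $q$-binomials in the theorem emerge from solving the system, not from the layer. Second, the verification that the five claimed double sums satisfy this $n$-recursion --- five coupled identities whose weights carry $n$ in the exponent and a factor $1/(1-q^{3n})$, matched against summands containing $1/(q;q)_{n-n_1}$ --- is precisely what you defer as ``the most delicate computation.'' Since uniqueness given the recursion and the initial data $F_{c,0}=1$ is trivial, that verification \emph{is} the proof, and none of it is supplied.

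For contrast, the paper avoids induction on $n$ altogether. It introduces the refinement $F_c(y,q)$ with $y$ marking $\max(\Lambda)$ and proves (Proposition \ref{incex}), by inclusion--exclusion over the set $J$ of components whose first part attains the maximum --- peeling a \emph{single} new largest part off each component in $J$, not the whole layer of parts equal to $n$ --- the functional equation $F_c(y,q)=\sum_{\emptyset\subset J\subseteq I_c}(-1)^{|J|-1}F_{c(J)}(yq^{|J|},q)/(1-yq^{|J|})$. After normalizing $G_c(y,q)=(yq;q)_\infty F_c(y,q)$, the resulting system for the five representative profiles has simple monomial weights such as $yq$, $yq^2$, $yq^3$, and is solved uniquely by induction on the power of $y$ (Theorem \ref{Thm:G}). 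The bounded statement is then a one-line coefficient extraction: $F_{c,n}(q)=[y^n]\,F_c(y,q)/(1-y)=\sum_{n_1=0}^{n}[y^{n_1}]G_c(y,q)/(q;q)_{n-n_1}$, using $1/(y;q)_\infty=\sum_{i\ge 0}y^i/(q;q)_i$; this is exactly where the factor $1/(q;q)_{n-n_1}$ comes from, so no $q$-Pascal manipulation of the finite sums is ever needed for Theorem \ref{new} itself. If you want to salvage your route, you must tabulate $s_0(c,c')$ over the (cyclically reduced) profiles and actually prove the five recursions; the paper's $y$-refinement is precisely the device that replaces that delicate bookkeeping with short coefficient computations.
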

This result gives a finite version of the sum side of the $A_2$ Rogers-Ramanujan identities.

In the $n\to\infty$ limit,
we recover the sum side of the identities of
Theorem \ref{main} divided by $(q;q)_\infty$.
We also explain how to get the product side. It is 
a corollary of a result of Borodin \cite{Bor}.
In Section \ref{prod} below,
we start by defining cylindric partitions and then obtain the product
sides of particular cylindric partitions.
These yield the right hand sides of the expressions in Theorem \ref{main}.
The sum expressions on the left hand sides are computed in Section \ref{sum}.

\medskip
 
\noindent{\bf Acknowledgments.}
SC was in residence at MSRI  (NSF grant DMS-1440140) and was
visiting the Mathematics
department at UC Berkeley during the completion of this work.
TW acknowledges partial support from the Australian Research Council.
The authors wish to thank Omar Foda for his interest
in this work and useful discussions.
The authors also wish to thank the anonymous referee
for her excellent suggestions and careful reading. 

\section{Cylindric partitions and the product side}
\label{prod}

Cylindric partitions were introduced by Gessel and Krattenthaler \cite{GK} and appeared naturally in different contexts
\cite{Bor,BCC,CSV,G,FW,Tin}. Let  $\ell$ and $k$ be two positive integers. 
In this note, we choose to index cylindric partitions by compositions of $\ell$ into $k$ non negative parts.
\begin{dfn}
Given a composition $c=(c_1,\ldots ,c_k)$,  a cylindric partition  of profile $c$ 
is a sequence of $k$ partitions
$\Lambda=(\lambda^{(1)},\ldots \lambda^{(k)})$ such that~:
\begin{itemize}
\item $
\lambda^{(i)}_j\ge \lambda^{(i+1)}_{j+c_{i+1}},
$
\item $\lambda^{(k)}_j\ge \lambda^{(1)}_{j+c_1}$.
\end{itemize}
for all $i$ and $j$.
\end{dfn}
For example, the sequence $\Lambda=((3,2,1,1),(4,3,3,1),(4,1,1))$
is a cylindric partition of profile $(2,2,0)$. 
One can check that for all $j$, $\lambda^{(1)}_j\ge \lambda^{(2)}_{j+2}$, $\lambda^{(2)}_j\ge \lambda^{(3)}_{j}$
and $\lambda^{(3)}_j\ge \lambda^{(1)}_{j+2}$ for all $j$.
Note that this definition implies that cylindric partitions of profile $(c_1,\ldots ,c_k)$ are in bijection with cylindric
partitions of profile $(c_k,c_1,\ldots ,c_{k-1})$.

Our goal is to compute  generating functions of cylindric partitions of a given profile $c$ according to
two statistics. Given a $\Lambda=(\lambda^{(1)},\ldots , \lambda^{(k)})$, let
\begin{itemize}
\item $|\Lambda|=\sum_{i=1}^k \sum_{j\ge 1} \lambda^{(i)}_j$, the sum of the entries of the cylindric
plane partition, and
\item $\max(\Lambda)=\max(\lambda^{(1)}_1,\ldots \lambda^{(k)}_1)$, the largest entry of the cylindric
plane partition.
\end{itemize}
Going back to our example, we have $|\Lambda|=24$, and $\max(\Lambda)=4$.

Let ${\mathcal C_c}$ be the set of cylindric partitions of profile $c$ and let  ${\mathcal C_{c,n}}$ be the set of cylindric partitions of profile $c$ and such that the largest entry is at most $n$.
We are interested in the following generating functions.
\begin{align}
\label{Def:Fcq}
F_c(q)&=\sum_{\Lambda\in \mathcal C_c} q^{|\Lambda|},\\ 
\label{Def:Fcyq}
F_c(y,q)&=\sum_{\Lambda\in \mathcal C_c} q^{|\Lambda|}y^{\max(\Lambda)},\\
\label{Def:Fcnq}
F_{c,n}(q)&=\sum_{\Lambda\in \mathcal C_{c,n}} q^{|\Lambda|}.
\end{align}

A surprising and beautiful result is that for any $c$,
the generating function $F_c(q)$ can be
written as a product.
Namely,
with $t=k+\ell$,
\begin{thm}\cite{Bor}
The generating function $F_c(q)$ is equal to
\begin{equation}
\frac{1}{(q^t;q^t)}\cdot
\prod_{i=1}^k\prod_{j=i+1}^k\prod_{m=1}^{c_i}\frac{1}{(q^{m+d_{i+1,j}+j-i};q^t)_\infty}
\cdot
\prod_{i=2}^k\prod_{j=2}^{i-1}\prod_{m=1}^{c_{i}}\frac{1}{(q^{t-(m+d_{j,i-1}+i-j)};q^t)_\infty}.
\end{equation}
where $d_{i,j}=c_i+c_{i+1}+\ldots +c_j$.
\end{thm}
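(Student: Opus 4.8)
The plan is to realise $F_c(q)$ as the trace of a product of vertex operators on the fermionic Fock space, following the periodic Schur process of Borodin, and then to evaluate that trace by normal ordering.

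First I would read a cylindric partition $\Lambda$ of profile $c$ by its diagonal slices, producing a cyclic sequence of $t=k+\ell$ partitions in which consecutive slices differ by a single horizontal strip; whether that strip is added or removed as one moves along the diagonals is recorded by a sign sequence $\epsilon_1,\dots,\epsilon_t\in\{+,-\}$ that is completely determined by $c$, the parts $c_i$ fixing the positions at which the direction reverses. Under the boson-fermion correspondence each slice becomes a vector $|\mu\rangle$, an upward (resp.\ downward) horizontal strip becomes a half-vertex operator $\Gamma_-(z)$ (resp.\ $\Gamma_+(z)$), and the weight $q^{|\Lambda|}$ is recovered by specialising the spectral parameters $z_i=q^{a_i}$ together with a degree operator $Q^{\deg}$ with $Q=q^{t}$ that accounts for one full turn around the cylinder. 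The cyclic identification of the last slice with the first is exactly the periodicity that closes the open product into a trace, so that
\begin{equation*}
F_c(q)=\mathrm{Tr}\!\left(Q^{\deg}\,\Gamma_{\epsilon_1}(z_1)\,\Gamma_{\epsilon_2}(z_2)\cdots\Gamma_{\epsilon_t}(z_t)\right).
\end{equation*}

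Next I would evaluate this trace by normal ordering. The single commutation relation $\Gamma_+(z)\Gamma_-(w)=(1-zw)^{-1}\Gamma_-(w)\Gamma_+(z)$, together with $Q^{\deg}\Gamma_\pm(z)=\Gamma_\pm(Q^{\pm1}z)\,Q^{\deg}$, lets me push every $\Gamma_-$ to the left of every $\Gamma_+$. Each time a $\Gamma_+(z_i)$ crosses a $\Gamma_-(z_j)$ standing to its right we collect one factor $(1-z_iz_j)^{-1}$; because the trace is cyclic, each $\Gamma_+$ must in addition be carried once around the circle, where it meets the degree operator and has its argument multiplied by $Q=q^{t}$. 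This yields two families of factors, the ``direct'' commutations taking place inside a single period and the ``wrap-around'' ones that carry the extra power $q^{t}$. What remains is the trace of $Q^{\deg}$ against a normally ordered product, which evaluates to $\prod_{n\ge1}(1-Q^n)^{-1}=1/(q^{t};q^{t})_\infty$; this is the prefactor in the statement.

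Finally I would match exponents. Reading the specialisations $a_i$ off the profile, I would show that the direct commutation factors regroup precisely into the first triple product, with exponent $m+d_{i+1,j}+j-i$, while the wrap-around factors regroup into the second triple product, with the complementary exponent $t-(m+d_{j,i-1}+i-j)$; the quantity $d_{i,j}=c_i+\cdots+c_j$ enters because the argument of each vertex operator accumulates the shifts of all the intervening parts of $c$. I expect this last bookkeeping to be the main obstacle. One must pin down the dictionary between $c$ and the pairs $(\epsilon_i,a_i)$, verify that the index pairs whose operators are commuted past one another (directly, or once around the cycle) are in bijection with the triples $(i,j,m)$ indexing the two products, and check that the ranges $i+1\le j\le k$ and $2\le j\le i-1$ match the direct and wrap-around interactions exactly, with the accumulated powers of $q$ agreeing on the nose.
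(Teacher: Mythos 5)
The paper contains no proof of this statement: it is imported verbatim from Borodin's paper \cite{Bor}, with only the remark that the original is written in a different but equivalent form. So the relevant comparison is with Borodin's own argument, and your sketch is essentially a reconstruction of it. The periodic Schur process is exactly the measure on cyclic sequences of interlacing partitions you describe, and the trace of half-vertex operators against $Q^{\deg}$ is the boson--fermion repackaging of Borodin's computation, which in the original is phrased in symmetric-function language, via the skew Cauchy identity $\sum_\lambda s_{\lambda/\mu}(x)\,s_{\lambda/\nu}(y)=\prod_{i,j}(1-x_iy_j)^{-1}\sum_\kappa s_{\mu/\kappa}(y)\,s_{\nu/\kappa}(x)$ applied repeatedly around the circle, rather than via explicit commutation of $\Gamma_\pm$; the two formalisms correspond step for step, so your route is the same proof in different clothing.

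Two points in your write-up deserve care. First, in the trace evaluation each pair consisting of a $\Gamma_+$ at position $i$ and a $\Gamma_-$ at position $j$ contributes not a single crossing factor but a whole geometric family $(1-z_iz_jQ^n)^{-1}$, with $n\ge 0$ for the direct pairs $i<j$ and $n\ge 1$ for the wrap-around pairs $i>j$, because the operators can be cycled past $Q^{\deg}$ any number of times; this is precisely why the factors assemble into Pochhammer symbols $(\,\cdot\,;q^t)_\infty$. Your phrase ``carried once around the circle'' understates this, although your displayed triple products do have the correct $(\,\cdot\,;q^t)_\infty$ form, so the slip is one of narration rather than of substance (one should also note the convergence requirement $0<q<1$ that makes the infinite iteration legitimate). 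Second, the step you explicitly defer --- fixing the dictionary between the profile $c$ and the sign word $(\epsilon_1,\dots,\epsilon_t)$ with specializations $z_i=q^{a_i}$, and verifying that the direct pairs produce exactly the exponents $m+d_{i+1,j}+j-i$ over the range $1\le i<j\le k$, $1\le m\le c_i$, while the wrap-around pairs produce $t-(m+d_{j,i-1}+i-j)$ --- is where all of the content specific to the stated indexing lives, and the partial sums $d_{i,j}$ do arise exactly as you predict, from the accumulated shifts between the two slices being commuted. As it stands your proposal is a correct and well-aimed plan whose skeleton matches the cited proof, but it is a plan: the final bookkeeping, while routine, is the bulk of the verification and is not carried out.
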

The original result is written is a  different but equivalent form.

For what follows, we restrict attention to the
case $\ell=4$ and $k=3$.
As cylindric partitions of profile $(c_1,\ldots ,c_k)$ are in bijection with 
partitions of profile $(c_k,c_1,\ldots ,c_{k-1})$, we need
only  compute the generating functions for
the compositions $(4,0,0)$,  $(3,1,0)$, $(3,0,1)$, $(2,2,0)$, and $(2,1,1)$. 
We now apply the previous theorem:
\begin{cor}
\begin{equation*}
\begin{split}
F_{(4,0,0)}(q)&=\frac{1}{(q;q)_\infty(q^2,q^3,q^3,q^4,q^4,q^5;q^7)_\infty},\\
F_{(3,1,0)}(q)&=\frac{1}{(q;q)_\infty(q,q^2,q^3,q^4,q^5,q^6;q^7)_\infty},\\
F_{(3,0,1)}(q)&=\frac{1}{(q;q)_\infty(q,q^2,q^3,q^4,q^5,q^6;q^7)_\infty},\\
F_{(2,2,0)}(q)&=\frac{1}{(q;q)_\infty(q,q^2,q^2,q^5,q^5,q^6;q^7)_\infty},\\
F_{(2,1,1)}(q)&=\frac{1}{(q;q)_\infty(q,q,q^3,q^4,q^6,q^6;q^7)_\infty}.\\
\end{split}
\end{equation*}
\label{43}
\end{cor}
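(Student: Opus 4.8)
The plan is to apply the product formula of Borodin (the stated theorem from \cite{Bor}) directly to each of the five compositions $(4,0,0)$, $(3,1,0)$, $(3,0,1)$, $(2,2,0)$, and $(2,1,1)$ of $\ell=4$ into $k=3$ parts, and to simplify the resulting triple products. Since we are in the case $\ell=4$, $k=3$, we have $t=k+\ell=7$, which explains the appearance of $(q^7;q^7)_\infty$ modulus throughout and the leading factor $1/(q^7;q^7)_\infty$. First I would record, for a given $c=(c_1,c_2,c_3)$, all the partial sums $d_{i,j}=c_i+\cdots+c_j$ that appear; with only three parts there are just a handful of these, and they are the only ingredient feeding into the exponents of the product.

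Next, for each composition I would expand the two double-indexed products in the theorem. The first product $\prod_{i=1}^{k}\prod_{j=i+1}^{k}\prod_{m=1}^{c_i}$ contributes factors $1/(q^{m+d_{i+1,j}+j-i};q^7)_\infty$, and the second product $\prod_{i=2}^{k}\prod_{j=2}^{i-1}\prod_{m=1}^{c_i}$ contributes factors $1/(q^{7-(m+d_{j,i-1}+i-j)};q^7)_\infty$. Because $k=3$, the index ranges collapse dramatically: in the first product $(i,j)$ ranges over $(1,2),(1,3),(2,3)$, while in the second product the inner range $2\le j\le i-1$ is nonempty only for $i=3$, forcing $j=2$. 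So for each $c$ there are at most six $m$-indexed exponents to compute (the total number of boxes, $\sum_i (k-1)c_i$ counted with multiplicity, works out to six since $\ell=4$ and the single-box profiles contribute fewer). I would tabulate these exponents modulo $7$ for each of the five profiles and check that they reproduce exactly the six exponents listed in each line of Corollary \ref{43}.

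The genuinely mechanical step is the separation of the $(q^7;q^7)_\infty$ factor: the stated identities in Corollary \ref{43} are written with a leading $1/(q;q)_\infty$ rather than $1/(q^7;q^7)_\infty$. To reconcile these I would observe that one of the six base-$7$ exponents, together with the leading modulus, can be absorbed; more precisely, each product side in the corollary has been rewritten so that the six exponents lie in $\{1,2,3,4,5,6\}$ and the modulus is pulled out as $1/(q;q)_\infty$ rather than $1/(q^7;q^7)_\infty$. Since $(q;q)_\infty=(q^7;q^7)_\infty\prod_{r=1}^{6}(q^r;q^7)_\infty$, I would verify that the exponents produced by Borodin's formula for each profile, when combined with the leading $1/(q^7;q^7)_\infty$, reassemble into exactly $1/(q;q)_\infty$ times the displayed six-factor product. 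This bookkeeping is the only place where an error is likely, so I would double-check it profile by profile.

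The main obstacle is purely organizational rather than conceptual: correctly evaluating the partial sums $d_{i,j}$ and the shift $j-i$ (respectively $i-j$) inside each product, and matching the second product's ``$7-(\cdots)$'' exponents against the first product's exponents without sign or off-by-one errors. There is no deep step here once Borodin's theorem is granted; the corollary is a direct specialization. I would present one fully worked profile, say $(2,1,1)$ (where both products are nonempty and illustrate every feature), and then state that the remaining four follow by the identical computation, listing the resulting exponent multisets so the reader can confirm each line of the corollary.
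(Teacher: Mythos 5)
Your overall strategy---direct specialization of Borodin's product formula at $\ell=4$, $k=3$, $t=7$ followed by exponent bookkeeping---is exactly what the paper does (its entire proof of Corollary~\ref{43} is ``We now apply the previous theorem''). But your bookkeeping plan contains a genuine error that would derail the verification. Each line of Corollary~\ref{43}, once $1/(q;q)_\infty$ is expanded via $(q;q)_\infty=(q^7;q^7)_\infty\prod_{r=1}^{6}(q^r;q^7)_\infty$, is $1/(q^7;q^7)_\infty$ times \emph{twelve} factors $1/(q^a;q^7)_\infty$, not six: six of the twelve exponents always form the complete nonzero residue system $\{1,2,\dots,6\}$ modulo $7$ (these assemble with $(q^7;q^7)_\infty$ into $(q;q)_\infty$), and the remaining six are the ones displayed. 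Your claim that there are ``at most six $m$-indexed exponents'' (and your count $\sum_i(k-1)c_i$, which equals $8$ here, not $6$) is incompatible with the target: a six-factor product over modulus $7$ with prefactor $1/(q^7;q^7)_\infty$ can never equal a six-factor product with prefactor $1/(q;q)_\infty$, so no absorption of a single exponent can reconcile them. The correct count is $k\ell=12$ factors besides $1/(q^t;q^t)_\infty$.

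The source of the discrepancy is that you read the index ranges of the displayed formula literally, taking $(i,j)\in\{(1,2),(1,3),(2,3)\}$ in the first product and only $(i,j)=(3,2)$ in the second. As printed, those ranges are off by one (note the paper's caveat that Borodin's result ``is written in a different but equivalent form''): the first product must also include the diagonal terms $j=i$ (exponent $m+d_{i+1,j}+j-i$ with $d$ over an empty range equal to $0$, i.e.\ exponent $m$), and the second product must include $j=i$ as well. You can see this already in the known rank-$2$ case: for $c=(3,0)$, $t=5$, the literal ranges give only the exponents $\{2,3,4\}$, whereas the correct product \eqref{RR2} requires $\{1,2,2,3,3,4\}$---the missing $\{1,2,3\}$ is exactly the diagonal block $(i,j)=(1,1)$. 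With the corrected ranges, e.g.\ $c=(2,1,1)$ yields the twelve exponents $\{1,1,1,2,3,3,4,4,5,6,6,6\}=\{1,\dots,6\}\cup\{1,1,3,4,6,6\}$, matching the last line of the corollary, whereas the literal ranges give only $\{3,3,4,4,5,6\}$, which matches nothing. So before tabulating you must repair the ranges (or work from Borodin's original statement); after that, your profile-by-profile verification goes through exactly as you describe.
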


\noindent Note that these five products are precisely those in
Theorem \ref{main} divided by $(q;q)_\infty$.

\section{The sum side}
\label{sum}

We first prove a general functional equation for $F_c(y,q)$ for any profile $c$.
Suppose that $k>1$ and $c=(c_1,\ldots, c_k)$. 
Let $I_c$ be the subset of $\{1,\ldots ,k\}$ such that $i\in I_c$ if and only if $c_i>0$.
For example if $c=(2,2,0)$ then $I_c=\{1,2\}$.
Given a subset $J$ of $I_c$, we define the composition $c{(J)}=(c_1(J),\ldots, c_k(J))$ by
$$
c_i{(J)}=\left\{\begin{array}{ll}
c_i-1 & {\rm if} \ {i\in J} \ {\rm and} \ {(i-1)\not\in J,}\\
c_i+1 & {\rm if} \ {i\notin J} \ {\rm and} \ {(i-1)\in J,}\\
c_i & {\rm otherwise.}\end{array}\right.
$$ 
Here we set $c_0=c_k$.
\begin{proposition} For any composition $c=(c_1,\ldots ,c_k)$,
\begin{equation}\label{Eq:INEX}
 F_c(y,q)=\sum_{\emptyset\subset J\subseteq I_c} (-1)^{|J|-1}  \frac{F_{c{(J)}}(yq^{|J|},q)}{1-yq^{|J|}}.
\end{equation}
with the conditions $F_c(0,q)=1$ and  $F_c(y,0)=1$.
\label{incex}
\end{proposition}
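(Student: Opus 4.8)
The plan is to prove the functional equation \eqref{Eq:INEX} by a sign-reversing inclusion-exclusion argument on the set of cylindric partitions of profile $c$, organized according to which parts of the profile ``support'' the maximal entry. The key observation is that the generating function $F_c(y,q)$ tracks both $|\Lambda|$ and $\max(\Lambda)$, and the natural operation is to peel off the first column (or more precisely, to remove a row from the ``top'' of each partition $\lambda^{(i)}$). The composition $c(J)$ is exactly what records the new profile after one performs such a removal indexed by the subset $J \subseteq I_c$ of parts at which a box in the top row is removed; the shift $y \mapsto yq^{|J|}$ reflects that removing $|J|$ boxes simultaneously decreases $\max(\Lambda)$ by one while contributing $q^{|J|}$ to the weight, and the factor $\frac{1}{1-yq^{|J|}}$ accounts for the geometric series arising from how many times one repeats the removal (i.e.\ the multiplicity of the maximal value).

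First I would set up the bijective/recursive decomposition. Fix a cylindric partition $\Lambda$ of profile $c$ with $\max(\Lambda)=N>0$. I would look at the set $S \subseteq \{1,\dots,k\}$ of indices $i$ for which $\lambda^{(i)}_1 = N$, i.e.\ the parts achieving the maximum. The cylindric inequalities $\lambda^{(i)}_j \ge \lambda^{(i+1)}_{j+c_{i+1}}$ (with the wraparound condition) constrain which subsets $S$ can occur and how the top entries propagate; decrementing all the top entries that equal $N$ by $1$ yields a new cylindric partition, but its profile changes precisely according to the rule defining $c(J)$, since shifting a boundary between a ``full'' part and an ``empty'' part moves a unit of $c_i$ to $c_{i+1}$ (this is the source of the two cases $c_i-1$ and $c_i+1$ in the definition, with $c_0=c_k$ handling the cyclic boundary). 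This is where the conditions $J \subseteq I_c$ and the case analysis on whether $(i-1)\in J$ enter.

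The inclusion-exclusion then arises because a single removal step does not cleanly partition the cylindric partitions; instead I would write $F_c(y,q)$ as a sum over nonempty $J$ with overcounting, and correct by the sign $(-1)^{|J|-1}$. Concretely, I expect to verify \eqref{Eq:INEX} by checking that both sides satisfy the same recursion in the maximal-entry grading: the right-hand side, after substituting $y\mapsto yq^{|J|}$ and expanding $\frac{1}{1-yq^{|J|}} = \sum_{m\ge 0} (yq^{|J|})^m$, should reproduce the generating function obtained by stripping off the top ``layer'' of value $N$ and recursing. The alternating sum over subsets $J$ is the standard inclusion-exclusion that isolates, among all ways of lowering maximal entries, the genuine cylindric partitions with a prescribed set of maxima. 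The boundary conditions $F_c(0,q)=1$ (only the empty partition has $\max=0$) and $F_c(y,0)=1$ (only the all-zero partition survives at $q=0$) anchor the recursion.

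The hard part will be pinning down the exact combinatorial correspondence between the subsets $S$ of maximizing indices and the subsets $J$ in the formula, and verifying that the profile transformation $c \mapsto c(J)$ is precisely the one induced by lowering those entries while respecting the cyclic boundary $\lambda^{(k)}_j \ge \lambda^{(1)}_{j+c_1}$. In particular, I anticipate that the subtlety lies in handling the wraparound (the $c_0 = c_k$ convention) and in confirming that the geometric-series factor $\frac{1}{1-yq^{|J|}}$ correctly encodes the repeated removal of equal maximal values without double-counting across different $J$. Once the single-step decomposition is established, the sign-reversing inclusion-exclusion and the identification of both sides as solutions of the same recursion with the same initial data should complete the proof.
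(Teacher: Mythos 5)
Your inclusion--exclusion skeleton is the paper's: sum over nonempty $J\subseteq I_c$ of positions supporting the maximum, a geometric factor $\frac{1}{1-yq^{|J|}}$, signs $(-1)^{|J|-1}$ justified by $\sum_{\emptyset\neq J\subseteq S}(-1)^{|J|-1}=1$, and the stated boundary conditions. But the central operation --- which you yourself defer as ``the hard part'' --- is exactly where your sketch goes wrong. You waver between ``remove a row from the top of each partition'' (the right instinct) and the version you actually develop, ``decrementing all the top entries that equal $N$ by $1$,'' claiming this changes the profile to $c(J)$. It does not: the profile conditions $\lambda^{(i)}_j\ge\lambda^{(i+1)}_{j+c_{i+1}}$ concern index shifts, and subtracting $1$ from entry values leaves those shifts, hence the profile, untouched. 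The decrement is not even well defined when the maximum repeats within a component: if $\lambda^{(j)}_1=\lambda^{(j)}_2=N$, lowering only $\lambda^{(j)}_1$ violates weak decrease. Relatedly, your reading of $\frac{1}{1-yq^{|J|}}$ as encoding ``the multiplicity of the maximal value'' double-books that multiplicity, which in the correct argument is handled entirely by the choice of $J$ and the inclusion--exclusion.

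The paper's actual move is to delete (equivalently, prepend) the \emph{entire first part} $\lambda^{(j)}_1$ for each $j\in J$: from $M=(\mu^{(1)},\dots,\mu^{(k)})$ of profile $c(J)$ with $\max(M)=n$ and an integer $m\ge 0$, set $\lambda^{(j)}=(m+n,\mu^{(j)}_1,\mu^{(j)}_2,\dots)$ for $j\in J$ and $\lambda^{(j)}=\mu^{(j)}$ otherwise. The profile transformation is then forced by the index shift $\lambda^{(i)}_j=\mu^{(i)}_{j-1}$ in components $i\in J$: the inequality with gap $c_{i+1}$ for $\Lambda$ becomes one with gap $c_{i+1}+[i\in J]-[i+1\in J]$ for $M$, which is precisely the definition of $c(J)$ (and this is why $J\subseteq I_c$ is needed, so that $c(J)$ has nonnegative parts --- your ``moving a unit of $c_i$ to $c_{i+1}$'' intuition is roughly this, but it is the row removal, not any decrement, that realizes it). The geometric series is over the excess $m=\max(\Lambda)-\max(M)\ge 0$ of the removed value over the new maximum, giving $\sum_{m\ge 0}y^{m+n}q^{|J|(m+n)}q^{|M|}=y^nq^{|J|n+|M|}/(1-yq^{|J|})$ and hence $F_{c(J)}(yq^{|J|},q)/(1-yq^{|J|})$ upon summing over $M$. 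Two further checks you would still owe: that deleting first parts at any nonempty $J\subseteq S$, $S=\{j\in I_c:\lambda^{(j)}_1=\max(\Lambda)\}$, always lands in $\mathcal{C}_{c(J)}$ (so $\Lambda$ arises from exactly the nonempty subsets of $S$), and that $S\neq\emptyset$ for every $\Lambda$, which holds because $c_i=0$ forces $\lambda^{(i-1)}_1\ge\lambda^{(i)}_1$, so the maximum is always attained at some index of $I_c$. With those in place the alternating sum in \eqref{Eq:INEX} counts each $\Lambda$ exactly once.
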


\begin{proof}
The proof make use of an inclusion-exclusion argument.

First, for fixed $J$ such that $\emptyset \subset J\subseteq I_c$,
we require the generating function
of cylindric partitions $\Lambda$ of profile $c$ such that
	$\lambda_1^{(j)}=\max(\Lambda)$ for all $j\in J$.

Let $M=(\mu^{(1)},\ldots ,\mu^{(k)})$ be a cylindric partition of
profile $c{(J)}$,
and set $n=\max(M)$.
Then, for a fixed integer $m\ge 0$,
create a cylindric partition
$\Lambda=(\lambda^{(1)},\ldots ,\lambda^{(k)})$
using the following recipe: 
\begin{equation*}
\lambda^{(j)}=
\begin{cases}
(m+n,\mu^{(j)}_1,\mu^{(j)}_2,\ldots )
	& \text{if $j\in J$,}\\
\mu^{(j)}
	& \text{if $j\notin J$.}
\end{cases}
\end{equation*}
It is easily checked that $\Lambda$ is a cylindric partition of profile
$c$ and that $\max(\Lambda)=m+n$.
Moreover, $\lambda_1^{(j)}=\max(\Lambda)$ for all $j\in J$.
The generating function for all cylindric partitions $\Lambda$
obtained from $M$ in this way is
\begin{equation}
\sum_{m=0}^\infty y^{m+n}q^{|J|(m+n)}q^{|M|}
= y^nq^{|J|n+|M|}\sum_{m=0}^\infty (yq^{|J|})^m
	= \frac{y^nq^{|J|n+|M|}}{1-yq^{|J|}}.
\end{equation}
Then the generating function for all cylindric partitions $\Lambda$
obtained in this way from any cylindric partition $M$ of profile
$c(J)$ is 
\begin{equation}\label{Eq:GFJ}
\sum_{M\in {\mathcal C}_{c{(J)}}}
	\frac{y^{\max(M)}q^{|J|\max(M)+|M|}}{1-yq^{|J|}}
= \frac{F_{c{(J)}}(yq^{|J|},q)}{1-yq^{|J|}},
\end{equation}
making use of the definition \eqref{Def:Fcyq}.

Let $\Lambda=(\lambda^{(1)},\ldots ,\lambda^{(k)})$ be an arbitrary
cylindric partition of profile $c$, and let $p=\max(\Lambda)$.
Because $\lambda_1^{(i-1)}\ge \lambda_1^{(i)}$ whenever $i\notin I_c$,
it must be the case that $p=\lambda^{(j)}_1$ for some $j\in I_c$.
Then, if $J\neq\emptyset$ is such that $p=\lambda^{(j)}_1$ for each $j\in J$
(this $J$ might not be unique), we see that
$\Lambda$ is one of the cylindric partitions enumerated by \eqref{Eq:GFJ}.
However, because $\Lambda$ can arise from various different $J$,
the generating function for cylindric partitions of profile $c$
is obtained via the inclusion-exclusion process.
This immediately gives \eqref{Eq:INEX}.
\end{proof}

Now, for each composition $c$, define
\begin{equation}\label{Def:G}
G_{c}(y,q)=
(yq;q)_\infty\,
F_c(y,q)
.
\end{equation}
In terms of this, the previous result translates to
\begin{equation}
G_c(y,q)=\sum_{\emptyset\subset J\subseteq I} (-1)^{|J|-1}  (yq;q)_{|J|-1}{G_{c{(J)}}(yq^{|J|},q)}
\label{Gb}
\end{equation}
with $G_c(0,q)=G_c(y,0)=1$.

\begin{thm}\label{Thm:G}
\begin{equation*}
\begin{split}
G_{(4,0,0)}(y,q)
&=\sum_{n_1=0}^\infty\sum_{n_2=0}^{2n_1}
y^{n_1}\frac{q^{n_1^2+n_2^2-n_1n_2+n_1+n_2}}{(q;q)_{n_1}}
\gauss{2n_1}{n_2},
\\
G_{(3,1,0)}(y,q)
&=\sum_{n_1=0}^\infty\sum_{n_2=0}^{2n_1}
y^{n_1}\frac{q^{n_1^2+n_2^2-n_1n_2+n_2}}{(q;q)_{n_1}}
\gauss{2n_1}{n_2},
\\
G_{(3,0,1)}(y,q)
&=\sum_{n_1=0}^\infty\sum_{n_2=0}^{2n_1} y^{n_1}{q^{n_1^2+n_2^2-n_1n_2}}
\frac{q^{n_1}}{(q;q)_{n_1}}
\gauss{2n_1}{n_2}
\\
&\quad+\sum_{n_1=1}^\infty\sum_{n_2=0}^{2n_1-2}
y^{n_1}{q^{n_1^2+n_2^2-n_1n_2}}\frac{q^{2n_2}}{(q;q)_{n_1-1}}
\gauss{2n_1-2}{n_2},
\\
G_{(2,2,0)}(y,q)
&=\sum_{n_1=0}^\infty\sum_{n_2=0}^{2n_1} y^{n_1}{q^{n_1^2+n_2^2-n_1n_2}}
\frac{q^{n_1}}{(q;q)_{n_1}}
\gauss{2n_1}{n_2}
\\
&\quad+\sum_{n_1=1}^\infty\sum_{n_2=0}^{2n_1-2}
y^{n_1}{q^{n_1^2+n_2^2-n_1n_2}}\frac{q^{n_2}(1+q^{n1+n2})}{(q;q)_{n_1-1}}
\gauss{2n_1-2}{n_2},
\\
G_{(2,1,1)}(y,q)
&=\sum_{n_1=0}^\infty\sum_{n_2=0}^{2n_1}
y^{n_1}\frac{q^{n_1^2+n_2^2-n_1n_2}}{(q;q)_{n_1}}
\gauss{2n_1}{n_2}.
\end{split}
\end{equation*}
\end{thm}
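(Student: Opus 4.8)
My plan is to turn the functional equation \eqref{Gb} into a closed system of five relations among the five series $G_{(4,0,0)},G_{(3,1,0)},G_{(3,0,1)},G_{(2,2,0)},G_{(2,1,1)}$, to observe that this system together with the normalization $G_c(0,q)=1$ has a unique solution as a power series in $y$, and finally to check that the five claimed series solve it.

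First I would specialize \eqref{Gb} to each of the five profiles $c$. For each nonempty $J\subseteq I_c$ I compute $c(J)$ from its definition, reading the index $0$ cyclically as $k=3$ (so that for $i=1$ the clause $(i-1)\in J$ is read as $3\in J$); this is the reading dictated by the convention $c_0=c_k$, and it is the only one under which $c(J)$ remains a composition of $4$. Each $c(J)$ is then one of the fifteen compositions of $4$ into three parts, and the rotation symmetry $G_{(c_1,c_2,c_3)}=G_{(c_3,c_1,c_2)}$ — valid because the cyclic shift of $\Lambda$ is an $|\Lambda|$- and $\max$-preserving bijection — lets me rewrite each $G_{c(J)}$ in terms of the five chosen representatives. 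For example $I_{(4,0,0)}=\{1\}$ yields at once $G_{(4,0,0)}(y,q)=G_{(3,1,0)}(yq,q)$, whereas $I_{(2,1,1)}=\{1,2,3\}$ produces the longest relation, a signed sum over all seven nonempty $J\subseteq\{1,2,3\}$ with prefactors $1$, $-(1-yq)$ and $(1-yq)(1-yq^2)$ for $|J|=1,2,3$.

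Next I would check that the system determines the $G_c$ uniquely. Writing $G_c(y,q)=\sum_{n\ge0}g_{c,n}(q)y^n$ with $g_{c,0}=1$ and extracting the coefficient of $y^n$ gives a linear system $A_nv_n=b_n$, where $v_n$ lists the five $g_{c,n}$ and $b_n$ depends only on the $g_{c,m}$ with $m<n$. Every off-diagonal entry of $A_n$ carries a positive power $q^{jn}$, and the diagonal entries are all $1$ except in the $(2,1,1)$ row, where it is $1-q^n+q^{2n}-q^{3n}=(1-q^n)(1+q^{2n})$; hence $A_n\equiv I\pmod q$, so $A_n$ is invertible over the ring of power series in $q$ for every $n\ge1$, and it suffices to verify the claimed series against the system and the normalization. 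Writing $A(y)=G_{(2,1,1)}(y,q)$ and $B(y)=G_{(3,1,0)}(y,q)$, a short re-indexing of the given double sums shows that the claimed formulas satisfy $G_{(4,0,0)}(y)=B(yq)$, $G_{(3,0,1)}(y)=A(yq)+yq\,B(yq^2)$ and $G_{(2,2,0)}(y)=A(yq)+yq\,A(yq^2)+yq^2B(yq^3)$; substituting these into the relations for the profiles $(4,0,0)$, $(3,0,1)$ and $(2,2,0)$ turns each into an identity after a shift of indices, with no binomial manipulation required.

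The relations for the profiles $(3,1,0)$ and $(2,1,1)$ are the crux. After the above substitutions and extraction of the coefficient of $y^{n_1}$, each becomes a recurrence that ties $a_{n_1}=[y^{n_1}]A(y)$ and $b_{n_1}=[y^{n_1}]B(y)$, which carry $\gauss{2n_1}{n_2}$, to the lower terms $a_{n_1-1},b_{n_1-1}$ (and, in the $(2,1,1)$ case, $a_{n_1-2}$), which carry $\gauss{2n_1-2}{n_2}$. Because the top entries of the Gaussian polynomials differ, no re-indexing can match the two sides, and I would instead apply the $q$-Pascal rule $\gauss{2n_1}{n_2}=\gauss{2n_1-1}{n_2-1}+q^{n_2}\gauss{2n_1-1}{n_2}$ twice, descending from $2n_1$ to $2n_1-2$, and then collect terms. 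I expect the $(2,1,1)$ relation to be the main obstacle: it is the longest, all five series enter it at the shifts $yq,yq^2,yq^3$ together with the factors $(1-yq)$ and $(1-yq)(1-yq^2)$, and the nontrivial diagonal coefficient $(1-q^n)(1+q^{2n})$ must survive the cancellations intact. The one genuinely new ingredient beyond the $(3,1,0)$ computation, however, is only the iterated use of this same $q$-Pascal recurrence.
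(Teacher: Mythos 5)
Your proposal is correct and follows essentially the same route as the paper: specialize \eqref{Gb} to the five profiles (using the rotation symmetry to reduce each $c(J)$ to the five representatives), argue that the system together with $G_c(0,q)=1$ determines the $G_c$ uniquely as power series in $y$, and verify the stated series against it --- your invertibility argument $A_n\equiv I\pmod q$ is a clean repackaging of the paper's coefficient-wise induction on its massaged system \eqref{Eq:Fun}, whose only same-level coupling is the factor $(1-q^n)$ in the $(2,1,1)$ relation. Your reduction of the real work to the $(3,1,0)$ and $(2,1,1)$ relations, to be settled by iterating the $q$-Pascal rule, matches the level of detail at which the paper itself leaves those computations.
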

\begin{proof}
In this proof, we abbreviate $G_c(y,q)$ to $G_c(y)$ for convenience.
Applying the form \eqref{Gb} of Propostion \ref{incex}
to the case $\ell=4$ and $k=3$ yields
\begin{equation*}
\begin{split}
G_{(4,0,0)}(y)&=G_{(3,1,0)}(yq),\\
G_{(3,1,0)}(y)&=G_{(3,0,1)}(yq)+G_{(2,2,0)}(yq)-(1-yq)G_{(2,1,1)}(yq^2),\\
G_{(3,0,1)}(y)&=G_{(4,0,0)}(yq)+G_{(2,1,1)}(yq)-(1-yq)G_{(3,1,0)}(yq^2),\\
G_{(2,2,0)}(y)&=G_{(3,0,1)}(yq)+G_{(2,1,1)}(yq)-(1-yq)G_{(2,1,1)}(yq^2),\\
G_{(2,1,1)}(y)&=G_{(2,1,1)}(yq)+G_{(2,2,0)}(yq)+G_{(3,1,0)}(yq)\\
&\quad-(1-yq)(G_{(2,2,0)}(yq^2)+G_{(2,1,1)}(yq^2)+G_{(3,0,1)}(yq^2))\\
&\quad+(1-yq)(1-yq^2)G_{(2,1,1)}(yq^3).\\
\end{split}
\end{equation*}
By manipulating these equations, we obtain
\begin{equation}\label{Eq:Fun}
\begin{split}
G_{(4,0,0)}(y)&=G_{(3,1,0)}(yq),\\
G_{(3,1,0)}(y)&=G_{(2,2,0)}(yq)+yq^2 G_{(3,1,0)}(yq^3)+yq G_{(2,1,1)}(yq^2),\\
G_{(3,0,1)}(y)&=G_{(2,1,1)}(yq)+yq G_{(3,1,0)}(yq^2),\\
G_{(2,2,0)}(y)&=G_{(2,1,1)}(yq)+yqG_{(2,1,1)}(yq^2)+yq^2 G_{(3,1,0)}(yq^3),\\
G_{(2,1,1)}(y)&=G_{(2,1,1)}(yq)+yq G_{(2,2,0)}(yq)+yq G_{(2,2,0)}(yq^2)\\
&\quad+yq^3 G_{(3,1,0)}(yq^4)+yq^2 G_{(2,1,1)}(yq^3).
\end{split}
\end{equation}
We claim that this system of equations \eqref{Eq:Fun} together with
the boundary conditions $G_c(0,q)=G_c(y,0)=1$ for each composition $c$,
is uniquely solved by the expressions stated in the theorem.
This is proved using an induction argument involving all five expressions.

We use induction on the exponents of $y$.
For each composition $c$, let $g_c(n)$ denote the coefficient
of $y^n$ in the solution $G_c(y)$ of \eqref{Eq:Fun}.
The boundary conditions $G_c(0,q)=1$ imply that each $g_c(0)=1$.
This holds for the expressions of the theorem.
So now, for $n>0$, assume that $g_c(n_1)$ agrees with the coefficient
of $y^{n_1}$ in the statement of the theorem for each $n_1<n$ and
each composition $c$.
We must check that $g_c(n)$, as determined by the expressions \eqref{Eq:Fun},
is equal to the coefficient of $y^n$ in the statement of the theorem for
each $c$.

The fifth expression in \eqref{Eq:Fun} implies that
\begin{equation*}
\begin{split}
(1-q^{n})g_{(2,1,1)}(n)&=
(q^{n}+q^{2n-1}) g_{(2,2,0)}(n-1)\\
&\qquad+ q^{4n-1}g_{(3,1,0)}(n-1)+q^{3n-1} g_{(2,1,1)}(n-1).
\end{split}
\end{equation*}
Using the expressions for $g_c(n)$ implied by the induction
hypothsis then yields
\begin{equation*}
g_{(2,1,1)}(n)=\sum_{n_2=0}^{2n} \frac{q^{n^2+n_2^2-nn_2}}{(q;q)_{n}}\left[\begin{array}{c}
2n\\ n_2\end{array}\right].
\end{equation*}
This expression, along with the other expressions for
$g_c(n)$ implied by \eqref{Eq:Fun}, enables us to compute,
in turn, $g_{(2,2,0)}(n)$, $g_{(3,0,1)}(n)$,
$g_{(3,1,0)}(n)$ and finally $g_{(4,0,0)}(n)$.
Because the expressions that result agree with the corresponding
coefficients of $y^n$ in the statement of the theorem,
the induction argument is complete.
\end{proof}

\noindent {\bf Proof of Theorem  \ref{new}.}
Comparing \eqref{Def:Fcyq} and \eqref{Def:Fcnq} leads to
\begin{equation*}
\begin{split}
F_{c,n}(q)&=[y^0] F_c(y,q)+ [y^1] F_c(y,q) + \cdots + [y^n] F_c(y,q)\\
&=[y^n] \frac{F_c(y,q)}{1-y}
= [y^n] \frac{G_c(y,q)}{(y;q)_n}
\end{split}
\end{equation*}
having used \eqref{Def:G}.
By the $q$-binomial theorem \cite{AndBook}, we have
\begin{equation*}
\frac{1}{(y,q)_n}=\sum_{i=0}^n \frac{y^i}{(q;q)_i},
\end{equation*}
and therefore it follows that
\begin{equation*}
F_{c,n}(q)
=
\sum_{n_1=0}^n \frac{1}{(q;q)_{n-n_1}}[y^{n_1}]G_c(y,q).
\end{equation*}
Applying this to the expressions of Theorem \ref{Thm:G}
then yields those of Theorem \ref{new}.
\qed

\noindent {\bf Proof of Theorem  \ref{main}.}
Let us now comment on how to get the end of the proof of Theorem \ref{main}.
The right hand side of the five identities correspond to Corollary \ref{43}. To get the sum side;
we first  let $n$ tend to infinity in Theorem \ref{new}
and multiply by $(q;q)_\infty$.
We get directly the left hand side of the first, second and fifth identities. 
Let us show how to get the left hand side third identity.
Theorem  \ref{new} states that the generating function of the cylindric partitions of profile $(3,0,1)$ and largest entry at most $n$  is:
\begin{equation}
F_{(3,0,1),n}(q)=\sum_{n_1,n_2} \frac{q^{n_1^2+n_2^2-n_1n_2}}{(q;q)_{n-n_1}(q;q)_{n_1}}
\left(q^{n_1}\left[\begin{array}{c}
2n_1\\ n_2\end{array}\right]+q^{2n_2}(1-q^{n_1})\left[\begin{array}{c}
2n_1-2\\ n_2\end{array}\right]\right)\
\label{(3,0,1)}
\end{equation}
We let $n\rightarrow \infty$  and multiply by $(q;q)_\infty$ in \eqref{(3,0,1)},
we get
\begin{equation*}
\sum_{n_1,n_2} \frac{q^{n_1^2+n_2^2-n_1n_2+n_1}}{(q;q)_{n_1}}
\left[\begin{array}{c}
2n_1\\ n_2\end{array}\right]+\sum_{n_1,n_2}\frac{q^{n_1^2+n_2^2-n_1n_2+2n_2}}{(q;q)_{n_1-1}}
\left[\begin{array}{c}
2n_1-2\\ n_2\end{array}\right]\\
\end{equation*}
After the change $(n_1,n_2)\rightarrow(n_1+1,n_2-1)$ in the second sum, we get
\begin{eqnarray*}
&=& \sum_{n_1,n_2} \frac{q^{n_1^2+n_2^2-n_1n_2+n_1}}{(q;q)_{n_1}}
\left[\begin{array}{c}
2n_1\\ n_2\end{array}\right]\\&&+\sum_{n_1,n_2}\frac{q^{(n_1-1)^2+(n_2+1)^2-(n_1-1)(n_2+1)+(n_1-1)+(2(n_1-1)-(n_2+1)+1)}}{(q;q)_{n_1-1}}
\left[\begin{array}{c}
2n_1-2\\ n_2\end{array}\right]\\
&=& \sum_{n_1,n_2} \frac{q^{n_1^2+n_2^2-n_1n_2+n_1}}{(q;q)_{n_1}}
\left[\begin{array}{c}
2n_1\\ n_2\end{array}\right]+\sum_{n_1,n_2}\frac{q^{n_1^2+n_2^2-n_1n_2+n_1+(2n_1-n_2+1)}}{(q;q)_{n_1}}
\left[\begin{array}{c}
2n_1\\ n_2-1\end{array}\right]\\
&=&\sum_{n_1,n_2} \frac{q^{n_1^2+n_2^2-n_1n_2+n_1}}{(q;q)_{n_1}}
\left[\begin{array}{c}
2n_1+1\\ n_2\end{array}\right],
\end{eqnarray*}
using the well known recurrence relation
$$
\left[\begin{array}{c}
n\\ k\end{array}\right]=\left[\begin{array}{c}
n-1\\ k\end{array}\right]+q^{n-k}\left[\begin{array}{c}
n-1\\ k-1\end{array}\right].
$$
This is the left hand side of the third identity of Theorem \ref{main}.

We leave the computation for  the left hand side of the fourth identity to the reader.
One needs to show that
$\displaystyle{
\sum_{n_1,n_2} \frac{q^{n_1^2+n_2^2-n_1n_2+n_2}}{(q;q)_{n_1}}
\left[\begin{array}{c}
2n_1+1\\ n_2\end{array}\right]}$
equals
$${\sum_{n_1,n_2}\frac{q^{n_1^2+n_2^2-n_1n_2}}{(q;q)_{n_1}}
\left(q^{n_1}\left[\begin{array}{c}
2n_1\\ n_2\end{array}\right]+q^{n_2}(1+q^{n1+n2})(1-q^{n_1})\left[\begin{array}{c}
2n_1-2\\ n_2\end{array}\right]\right).}$$
\qed

\end{document}